\newtheorem{definition}{Definition}[section]
\newtheorem{theorem}[definition]{Theorem}
\newtheorem{corollary}[definition]{Corollary}
\newtheorem{remark}[definition]{Remark}
\title{Reweighting metric measure spaces and Onsager\textendash{}Machlup}
\author{Zachary Selk}
\address{Department of Mathematics\\ Florida State University Tallahassee, FL}
\email{zselk@fsu.edu}
\date{\today}
\begin{document}

\begin{abstract}
Given a metric measure space $M:=(X,d,\mu)$ the Onsager\textendash{}Machlup (OM) functional is a real valued function that has been seen as a generalized notion of a probability density function. The effect of reweighting the measure on OM functionals has been studied, however analogous reweightings of the metric to the best of our knowledge remain open. In this short note, we prove a transformation formula for OM functionals on geodesic metric measure spaces under reweighting of both the metric and the measure.
\end{abstract}

\keywords{Metric geometry, Probability, Onsager\textendash{}Machlup functional}
{\maketitle}
\section{Introduction}
One of the difficulties in infinite dimensional probability, which is the natural setting for stochastic processes, is the lack of a Lebesgue measure to define probability density functions. The Onsager\textendash{}Machlup (OM) functional (introduced in \cite{OM-Original}) is a generalized notion of a probability density function which often exists for general metric measure spaces, including infinite dimensional spaces. Although OM theory was introduced in the context of stochastic analysis, it is fundamentally about the interaction of measures and metrics, and is defined in terms of ratios of measures of small balls in the following way.

\begin{definition}
    Let $M:=(X,d,\mu)$ be a metric measure space and let $Z\subseteq X$. Denote by $B(r,z)$ the ball of radius $r>0$ around $z\in Z$. Suppose that 
    \begin{equation}\label{eq:raio}
        \lim_{r\to 0^+}\frac{\mu(B(r,x))}{\mu(B(r,y))}=\exp\left(\operatorname{OM}(y)-\operatorname{OM}(x)\right),
    \end{equation}
    for all $x,y\in Z$. Then $\operatorname{OM}=\operatorname{OM}(X,d,\mu)$ is called an Onsager\textendash{}Machlup (OM) functional of $M$ on $Z$.
\end{definition}
We use the indefinite article ``an" instead of the definite article ``the" because OM functionals are only defined up to additive constants. Indeed, if $\operatorname{OM}$ satisfies \eqref{eq:raio} then so does $\operatorname{OM}+C$ for any $C\in \mathbb R$. For convenience, in the sequel by ``the OM functional" we mean ``an OM functional up to an additive constant."

The OM functional of a metric measure space can be seen as a generalization of a probability density function via Lebesgue's differentiation theorem. If $M=(\mathbb R^n,d,e^{-f}\lambda)$ where $d$ is the Euclidean metric and $\lambda$ is the Lebesgue measure, then Lebesgue's differentiation theorem implies that the OM functional of $M$ is $f$. Similarly, if $M=(X,d,\mu)$ where $\mu$ is any discrete probability measure, continuity of measure implies that the OM functional of $M$ is the negative log density of the probability mass function. 

Often, especially in infinite dimensional settings, $Z$ is much smaller than $X$. Indeed, if $y\not\in \operatorname{supp}(\mu)$ then the ratio in \eqref{eq:raio} is not well defined even pre-limit. Furthermore, in the setting of infinite dimensional Gaussian measures, the OM functional is finite only on the measure's Cameron\textendash{}Martin space, which has measure $0$ (see \cite{Bogachev, Dashti} e.g.). As OM theory has been introduced in the context of probability theory, there has been much interest in transformations of OM functionals under equivalent changes of measure. For example, see \cite{OM-Gamma-1,Dashti,Self-FWOM} for various versions of the following result. 
\begin{theorem}\label{thm:fixed-metric}
    Let $M_0=(X,d,\mu_0)$ be a metric measure space with OM function $\operatorname{OM}_0$ on $Z\subseteq X$. Suppose that $\mu=e^{-V}\mu_0$ for a locally uniformly continuous function $V:X\to \mathbb R$. Then the OM function of $M=(X,d,e^{-V}\mu_0)$ on $Z$ is $\operatorname{OM}=\operatorname{OM}_0+V$. 
\end{theorem}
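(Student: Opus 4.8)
The plan is to evaluate the defining limit \eqref{eq:raio} directly for the reweighted measure $\mu = e^{-V}\mu_0$ and to reduce it to the known limit for $\mu_0$ by peeling off the weight. Fix $x,y \in Z$. Expressing ball measures as integrals,
\begin{equation*}
    \mu(B(r,x)) = \int_{B(r,x)} e^{-V(z)}\, d\mu_0(z),
\end{equation*}
and similarly at $y$. The guiding idea is that, as $r \to 0^+$, the integrand $e^{-V(z)}$ is nearly constant on the shrinking ball $B(r,x)$, with value close to $e^{-V(x)}$, so that $\mu(B(r,x)) \approx e^{-V(x)}\mu_0(B(r,x))$.

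To make this precise I would use the local uniform continuity of $V$. Fix $\epsilon > 0$. By hypothesis $V$ is uniformly continuous on some neighborhood of $x$, so for all sufficiently small $r$ every $z \in B(r,x)$ satisfies $|V(z)-V(x)| \le \epsilon$, since $d(z,x)<r$ and the ball is eventually contained in that neighborhood. Integrating the resulting pointwise bounds $e^{-V(x)-\epsilon} \le e^{-V(z)} \le e^{-V(x)+\epsilon}$ over $B(r,x)$ gives
\begin{equation*}
    e^{-V(x)-\epsilon}\,\mu_0(B(r,x)) \;\le\; \mu(B(r,x)) \;\le\; e^{-V(x)+\epsilon}\,\mu_0(B(r,x)),
\end{equation*}
with the analogous two-sided bound at $y$.

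Dividing the bound at $x$ by that at $y$ and letting $r \to 0^+$, the existence of $\operatorname{OM}_0$ on $Z$ (which in particular forces $\mu_0(B(r,y))>0$ for small $r$) lets me invoke \eqref{eq:raio} for $M_0$, so that both the $\liminf$ and the $\limsup$ of $\mu(B(r,x))/\mu(B(r,y))$ lie within a factor $e^{2\epsilon}$ of $e^{V(y)-V(x)}\exp(\operatorname{OM}_0(y)-\operatorname{OM}_0(x))$. Since $\epsilon>0$ is arbitrary, the limit exists and equals
\begin{equation*}
    \exp\bigl((\operatorname{OM}_0(y)+V(y)) - (\operatorname{OM}_0(x)+V(x))\bigr),
\end{equation*}
which by comparison with \eqref{eq:raio} identifies $\operatorname{OM}=\operatorname{OM}_0+V$ as an OM functional of $M$ on $Z$.

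The single point requiring care---and the only plausible obstacle---is the uniform continuity step: one must ensure that the oscillation of $V$ over $B(r,x)$ is controlled uniformly as $r \to 0$ and that the ball eventually sits inside the neighborhood on which $V$ is uniformly continuous. Both follow from local uniform continuity together with the fact that $B(r,x)$ shrinks to $x$, so this amounts to careful bookkeeping rather than a genuine difficulty.
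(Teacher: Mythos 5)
Your proof is correct. Note that the paper does not actually supply its own proof of Theorem \ref{thm:fixed-metric} — it quotes the result from the literature (\cite{OM-Gamma-1,Dashti,Self-FWOM}) — but your argument, using local uniform continuity to bound the oscillation of $V$ on small balls and then squeezing $\mu(B(r,x))/\mu(B(r,y))$ between $e^{\pm 2\epsilon}$ multiples of the $\mu_0$-ratio before invoking \eqref{eq:raio}, is the standard one and coincides with the technique the paper itself deploys (via the modulus $\beta_{r,x}$) inside the proof of Theorem \ref{thm:main}(b) to transfer \eqref{eq:small-ball-C} from $\mu_0$ to $\mu$.
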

OM theory can be placed in the study of general metric measure spaces, so it is natural to ask what happens under changes of metric. Although it is known that even equivalent metrics can lead to different OM functionals (see Example B.4 in \cite{OM-Gamma-1}), a similar transformation formula for joint reweightings of metrics and measures (in the sense of \cite{Han-mms-conformal}) is absent from the literature. The purpose of this note is to show the following formula for the reweighting of metric measure spaces.
\begin{theorem}\label{thm:main}
Let $M:=(X,d_0,\mu_0)$ be a geodesic metric measure space with OM functional $\operatorname{OM}_0$ on $Z\subseteq X$. Let $U,V:X\to \mathbb R$ be locally uniformly continuous, and let $M:=(X,e^{-U}d_0,e^{-V}\mu_0)=:(X,d,\mu).$ Denote by $B_0(r,x)$ the $d_0$ ball of radius $r$ around $x$ and by $B(r,x)$ the $d$ ball of radius $r$ around $x$.
\begin{enumerate}[label={(\alph*)}]
\item Suppose that $U$ is constant. Then the OM functional for $M$ up to an additive constant is $$\operatorname{OM}=\operatorname{OM}_0+V.$$
\item Suppose there is a point $q\in Z$ so that 
\begin{equation}\label{eq:small-ball-C}
    \lim_{r\to 0^+}\frac{\mu_0(B_0(Cr,q))}{\mu_0(B_0(r,q))}= C^p
\end{equation}
for some $p\in \mathbb R$ and for all $C>0$. Then the OM functional for $M$ up to an additive constant is  $$\operatorname{OM}=\operatorname{OM}_0-pU+V.$$
\item Suppose there is a point $q\in Z$, some $C\in (0,\infty)$ and $\alpha>0$ so that the small ball estimate
\begin{equation}\label{eq:small-ball-est}
    \lim_{r\to 0^+}r^\alpha \log \mu_0(B_0(r,q))=-C
\end{equation}
holds. Then if $U$ is nonconstant, the OM functional is not well defined on $Z$. In particular, for every $x\in Z$ there is some $y\in Z$ so that 
$$\lim_{r\to 0^+}\frac{\mu(B(r,x))}{\mu(B(r,y))}=0$$
or 
$$\lim_{r\to 0^+}\frac{\mu(B(r,x))}{\mu(B(r,y))}=\infty.$$
\end{enumerate} 
\end{theorem}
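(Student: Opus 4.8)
The plan is to reduce all three parts to a single geometric comparison between $d$-balls and $d_0$-balls, after which the measure reweighting is handled exactly as in Theorem~\ref{thm:fixed-metric}. Since $(X,d_0)$ is geodesic, hence a length space, the conformal distance is $d(x,y)=\inf_\gamma\int_\gamma e^{-U}\,d\ell_0$, the infimum over curves $\gamma$ from $x$ to $y$ with $\ell_0$ the $d_0$-arclength. The key lemma I would establish is that for each $x\in Z$ and each $\epsilon>0$ there is an $r_0>0$ with
\begin{equation*}
B_0\big(e^{U(x)-\epsilon}r,x\big)\subseteq B(r,x)\subseteq B_0\big(e^{U(x)+\epsilon}r,x\big),\qquad 0<r<r_0 .
\end{equation*}
The left inclusion is routine: for $y$ close to $x$, every point of a $d_0$-geodesic from $x$ to $y$ is $d_0$-closer to $x$ than $y$ is, so the geodesic stays in a $d_0$-ball on which $|U-U(x)|<\epsilon$, whence its $d$-length is at most $e^{-U(x)+\epsilon}d_0(x,y)$. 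The right inclusion is the crux: any curve issuing from $x$ of $d$-length below $e^{-U(x)-\epsilon}\delta$ cannot exit $B_0(\delta,x)$ (its first exiting segment would already have $d_0$-length at least $\delta$, hence $d$-length at least $e^{-U(x)-\epsilon}\delta$), and on a curve confined to $B_0(\delta,x)$ one has $d_0(x,y)\le\int_\gamma d\ell_0\le e^{U(x)+\epsilon}L_d(\gamma)$. This lower bound on $d$ in terms of $d_0$ is \emph{the main obstacle}, and it is exactly where the geodesic/length-space hypothesis is indispensable. Local uniform continuity of $U$ furnishes the modulus $\delta=\delta(\epsilon)$, while local uniform continuity of $V$ gives the companion estimate $e^{-V(x)-\epsilon}\mu_0(B(r,x))\le\mu(B(r,x))\le e^{-V(x)+\epsilon}\mu_0(B(r,x))$ for small $r$.

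For part (a), a constant $U\equiv c$ makes the comparison an exact equality $B(r,x)=B_0(e^{c}r,x)$, so no small-ball hypothesis is needed. Substituting $s=e^{c}r$ shows the $\mu$-ball ratios for $(X,d,\mu)$ agree in the limit $r\to0^+$ with those for $(X,d_0,\mu)$, and Theorem~\ref{thm:fixed-metric} applied to $\mu=e^{-V}\mu_0$ yields $\operatorname{OM}=\operatorname{OM}_0+V$.

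For part (b) I would first propagate the scaling \eqref{eq:small-ball-C} from $q$ to an arbitrary $x\in Z$. Writing
\begin{equation*}
\frac{\mu_0(B_0(Cr,x))}{\mu_0(B_0(r,x))}=\frac{\mu_0(B_0(Cr,x))}{\mu_0(B_0(Cr,q))}\cdot\frac{\mu_0(B_0(Cr,q))}{\mu_0(B_0(r,q))}\cdot\frac{\mu_0(B_0(r,q))}{\mu_0(B_0(r,x))},
\end{equation*}
the two outer ratios tend to $e^{\operatorname{OM}_0(q)-\operatorname{OM}_0(x)}$ and $e^{\operatorname{OM}_0(x)-\operatorname{OM}_0(q)}$ by definition of $\operatorname{OM}_0$ and cancel, while the middle ratio tends to $C^p$; hence the limit is $C^p$ for every $x\in Z$. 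Taking $C=e^{U(x)}$ in this identity, combining with the ball comparison, the measure estimate, and the definition of $\operatorname{OM}_0$, and finally letting $\epsilon\to0$, the ratio $\mu(B(r,x))/\mu(B(r,y))$ converges to
\begin{equation*}
\exp\big((\operatorname{OM}_0(y)+V(y)-pU(y))-(\operatorname{OM}_0(x)+V(x)-pU(x))\big),
\end{equation*}
which is precisely $\operatorname{OM}=\operatorname{OM}_0-pU+V$.

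For part (c) the same cancellation propagates \eqref{eq:small-ball-est}: since $\log\big(\mu_0(B_0(r,x))/\mu_0(B_0(r,q))\big)=O(1)$ is annihilated by the factor $r^\alpha$, one gets $\lim_{r\to0^+}r^\alpha\log\mu_0(B_0(r,x))=-C$ for all $x\in Z$. Inserting the ball comparison and the measure estimate into $\log\mu(B(r,x))$ and letting $\epsilon\to0$ gives the leading asymptotic $r^\alpha\log\mu(B(r,x))\to-Ce^{-\alpha U(x)}$, the $-V(x)$ contribution being of lower order. Consequently
\begin{equation*}
\log\frac{\mu(B(r,x))}{\mu(B(r,y))}=r^{-\alpha}\big(C\,(e^{-\alpha U(y)}-e^{-\alpha U(x)})+o(1)\big).
\end{equation*}
Since $U$ is not constant on $Z$, for each $x\in Z$ there is $y\in Z$ with $U(y)\neq U(x)$, so $e^{-\alpha U(y)}-e^{-\alpha U(x)}\neq0$; as $r^{-\alpha}\to+\infty$ the ratio then tends to $0$ or to $\infty$ according to the sign, and no finite OM functional can exist on $Z$.
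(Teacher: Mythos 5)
Your proposal is correct and takes essentially the same route as the paper: the same two-sided ball comparison $B_0(e^{U(x)-\epsilon}r,x)\subseteq B(r,x)\subseteq B_0(e^{U(x)+\epsilon}r,x)$ from local uniform continuity of $U$, the same propagation of the small-ball hypotheses from $q$ to all of $Z$ by telescoping against $\operatorname{OM}_0$, and the same three-factor decomposition of the ball-measure ratios. If anything, your version is more careful in two spots: you justify the lower bound $d(x,y)\geq e^{-U(x)-\epsilon}d_0(x,y)$ with an explicit first-exit argument (the paper asserts it directly from the definition of $d$), and in part (c) you work at the level of $r^{\alpha}\log\mu(B(r,x))\to -Ce^{-\alpha U(x)}$, which both respects the fact that \eqref{eq:small-ball-est} is only a logarithmic asymptotic and gets the sign right (for $U(x)>U(y)$ the ratio tends to $\infty$, whereas the paper's final display claims the exponent is negative and the limit is $0$ --- a harmless slip, since the theorem's disjunctive conclusion holds either way).
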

\begin{remark}
Any $n$-dimensional Riemannian manifold $(M,g,\operatorname{vol}_g)$ satisfies \eqref{eq:small-ball-C} with $p=n$, see e.g. Theorem 3.1 in \cite{Gray-RM-small-ball}. This also implies that any finite dimensional Riemannian manifold, equipped with a measure equivalent to its volume form also satisfies \eqref{eq:small-ball-C}, such as a smooth metric measure space. Small ball estimates of the form \eqref{eq:small-ball-est} are ubiquitous in infinite dimensional probability. It would be impossible to name all examples but a very small sampling include solutions to SPDEs \cite{Davar-Mueller-small-ball}, (fractional) Brownian motion and more general Gaussian processes \cite{Small-ball-Gaussian}, Brownian sheets \cite{Brownian-sheet-small-ball}, and integrated Brownian motion \cite{Integrated-BM-small-ball}.  Parts (b) and (c) of Theorem \ref{thm:main} can, therefore, be seen as covering many examples of finite and infinite dimensional spaces, respectively. 
\end{remark}
We conclude the introduction by mentioning the definition of $e^{-U}d_0$. As we assume that $M_0$ is geodesic, we have that
\begin{equation}
    d_0(x,y)=\inf\left\{\int_0^1 |\dot \gamma(t)|dt: \gamma\in \operatorname{AC}([0,1],X),\gamma(0)=x,\gamma(1)=y\right\},
\end{equation}
where $\operatorname{AC}([0,1],X)$ is the set of absolute continuous paths into $X$. We therefore define $d:=e^{-U}d_0$ by
\begin{equation}\label{def:d}
    d(x,y)=\inf\left\{\int_0^1 e^{-U(\gamma(t))}|\dot \gamma(t)|dt: \gamma\in \operatorname{AC}([0,1],X),\gamma(0)=x,\gamma(1)=y\right\}.
\end{equation}
As $U$ is locally uniformly continuous, $U$ is locally bounded. Therefore $d$ is topologically equivalent to $d_0$. If $U$ is also globally bounded, then $d$ is bi-Lipschitzly equivalent to $d_0$ in the sense that there is $C>0$ so that $C^{-1}d_0(x,y)\leq d(x,y)\leq Cd_0(x,y)$ for all $x,y\in X$. 
\section{Proof of main theorem}
\begin{proof}
\begin{enumerate}[label={(\alph*)}]
    \item If $U=C$ is constant, then $$\frac{\mu(B(r,x))}{\mu(B(r,y))}=\frac{\mu(B_0(re^C,x))}{\mu(B_0(re^C,y))}$$
    and Theorem \ref{thm:fixed-metric} concludes.
    \item First, we show that, if \eqref{eq:small-ball-C} holds, then it holds for all $z\in Z$. To that end, let $z\in Z$. Then 
\begin{align*}
   \lim_{r\to 0^+} \frac{\mu_0(B_0(Cr,z))}{\mu_0(B_0(r,z))}&= \lim_{r\to 0^+}\frac{\mu_0(B_0(Cr,z))}{\mu_0(B_0(Cr,q))}\frac{\mu_0(B_0(r,q))}{\mu_0(B_0(r,z))}\frac{\mu_0(B_0(Cr,q))}{\mu_0(B_0(r,q))}\\
   &=\exp\left(\operatorname{OM}_0(q)-\operatorname{OM}_0(z)-\operatorname{OM}_0(q)+\operatorname{OM}_0(q)\right)C^p\\
    &=C^p.
\end{align*}
Now we show that \eqref{eq:small-ball-C} holds for $\mu$. As $V$ is locally uniformly continuous, for any $r>0$ and $x\in X$, there exists a continuous, increasing function $\beta_{r,x}:[0,\infty)\to [0,\infty)$ with $\lim_{s\to 0^+}\beta_{r,x}(s)=0$ so that, for any $y\in B_0(r,x)$, we have $$|V(y)-V(x)|\leq \beta_{r,x}(d_0(x,y))\leq \beta_{r,x}(r).$$
Therefore we have that
$$ \frac{\mu(B_0(Cr,x))}{\mu(B_0(r,x))}=\frac{\int_{B_0(Cr,x)}e^{-V(y)}d\mu_0(y)}{\int_{B_0(r,x)}e^{-V(y)}d\mu_0(y)}\leq \frac{\mu_0(B_0(Cr,x))}{\mu_0(B_0(r,x))}\frac{e^{-V(x)+\beta_{x,Cr}(r)}}{e^{-V(x)-\beta_{x,r}(r)}}
$$
and similarly
$$\frac{\mu_0(B_0(Cr,x))}{\mu_0(B_0(r,x))}\frac{e^{-V(x)-\beta_{x,Cr}(r)}}{e^{-V(x)+\beta_{x,r}(r)}}\leq \frac{\mu(B_0(Cr,x))}{\mu(B_0(r,x))}.$$
Letting $r\to 0^+$ implies that
$$\lim_{r\to 0^+}\frac{\mu(B_0(Cr,x))}{\mu(B_0(r,x))}=C^p.$$

As $U$ is locally uniformly continuous, for any $r>0$ and $x\in X$, there exists a continuous, increasing function $\omega=\omega_{r,x}:[0,\infty)\to [0,\infty)$ with $\lim_{s\to 0^+}\omega(s)=0$ so that for any $y\in B_0(r,x)$ we have $$|U(y)-U(x)|\leq \omega(d_0(x,y))\leq \omega(r).$$
Therefore for $y\in B_0(r,x)$ we have from the definition of $d$ given in \eqref{def:d} that $$d_0(x,y)e^{-U(x)-\omega(r)}\leq d(x,y)\leq d_0(x,y) e^{-U(x)+\omega(r)}.$$
This implies the inclusions
$$B_0(re^{U(x)-\omega(r)},x)\subseteq B(r,x)\subseteq B_0(re^{U(x)+\omega(r)},x).$$
Furthermore, for any $R>r$ we also have that 
$$B_0(re^{U(x)-\omega(R)},x)\subseteq B(r,x)\subseteq B_0(re^{U(x)+\omega(R)},x).$$
Therefore for $x,y\in Z$, setting $\omega(s):=\max(\omega_{r,x}(s),\omega_{r,y}(s))$ gives
\begin{align*}
    \lim_{r\to 0^+}\frac{\mu(B(r,x))}{\mu(B(r,y))}&\leq  \lim_{r\to 0^+}\frac{\mu(B_0(re^{U(x)+\omega(R)},x))}{\mu(B_0(re^{U(y)-\omega(R)},y))} \\
    &= \lim_{r\to 0^+}\frac{\mu(B_0(re^{U(x)+\omega(R)},x))}{\mu(B_0(r,x))}\frac{\mu(B_0(r,y))}{\mu(B_0(re^{U(y)-\omega(R)},y))}\frac{\mu(B_0(r,x))}{\mu(B_0(r,y))}\\
    &=e^{p(U(x)+\omega(R))}e^{-p(U(y)-\omega(R))}e^{\operatorname{OM}_0(y)+U(y)-\operatorname{OM}(x)-U(y)},
\end{align*}
where we used Theorem \ref{thm:fixed-metric}. Similarly, we have that 
$$\lim_{r\to 0^+}\frac{\mu(B(r,x))}{\mu(B(r,y))}\geq e^{p(U(x)-\omega(R))}e^{-p(U(y)+\omega(R))}e^{\operatorname{OM}_0(y)+U(y)-\operatorname{OM}(x)-U(y)}.$$
Sending $R\to 0^+$ concludes. 
\item First, we show that, if \eqref{eq:small-ball-est} holds, then it holds for all $z\in Z$. To that end, let $z\in Z$. Then 
\begin{align*}
    \lim_{r\to 0^+}r^\alpha \log \mu_0(B_0(r,z))&=  \lim_{r\to 0^+}r^\alpha \log \left(\mu_0(B_0(r,q))\frac{\mu_0(B_0(r,z))}{\mu_0(B_0(r,q))}\right)\\
    &=\lim_{r\to 0^+}r^\alpha \log \left(\mu_0(B_0(r,q))\right)+\lim_{r\to 0^+}r^\alpha \log \left(\frac{\mu_0(B_0(r,z))}{\mu_0(B_0(r,q))}\right)\\
    &=-C+0\times \left(\operatorname{OM}_0(q)-\operatorname{OM}_0(z)\right)\\
    &=-C.
\end{align*}
Now we show that \eqref{eq:small-ball-est} holds for $\mu$. As $V$ is locally uniformly continuous, for any $r>0$ there exists a continuous, increasing function $\beta_{r,q}:[0,\infty)\to [0,\infty)$ with $\lim_{s\to 0^+}\beta_{r,q}(s)=0$ so that for any $y\in B_0(r,q)$ we have $$|V(y)-V(q)|\leq \beta_{r,q}(d_0(q,y))\leq \beta_{r,q}(r).$$
Therefore, we have that
\begin{align*}
    \lim_{r\to 0^+}r^\alpha \log \mu(B_0(r,q))&= \lim_{r\to 0^+}r^\alpha \log\int_{B_0(r,q)}e^{-V(y)}d\mu_0(y)\\
    &\leq \lim_{r\to 0^+}r^\alpha \log \left(e^{\beta_{r,q}(r)+V(q)}\mu_0(B_0(r,q))\right)\\
    &=-C,
\end{align*}
and the lower bound follows similarly. 

Now we are able to prove (c). For any $x\in Z$, as $U$ is nonconstant without loss of generality there is some $y$ with $U(x)>U(y)$. Then similarly to the proof of part (b), we have for $r>0$ that
\begin{align*}
    \frac{\mu(B(r,x))}{\mu(B(r,y))}&\leq  \frac{\mu(B_0(re^{U(x)+\omega(R)},x))}{\mu(B_0(re^{U(y)-\omega(R)},y))} \\
    &= \frac{\mu(B_0(re^{U(x)+\omega(R)},x))}{\mu(B_0(r,x))}\frac{\mu(B_0(r,y))}{\mu(B_0(re^{U(y)-\omega(R)},y))}\frac{\mu(B_0(r,x))}{\mu(B_0(r,y))}\\
    &=:B.
\end{align*}
The small ball estimate \eqref{eq:small-ball-est} and Theorem \ref{thm:fixed-metric} imply that $B$ is asymptotic to 
\begin{align*}
    B&\sim \frac{e^{-C/(re^{U(x)+\omega(R)})^\alpha}}{e^{-C/r^\alpha}}\frac{e^{-C/r^\alpha}}{e^{-C/(re^{U(y)-\omega(R)})^\alpha}}e^{\operatorname{OM}_0(y)+U(y)-\operatorname{OM}_0(x)-U(x)}\\
    &=\frac{e^{-C/(re^{U(x)+\omega(R)})^\alpha}}{e^{-C/(re^{U(y)-\omega(R)})^\alpha}}e^{\operatorname{OM}_0(y)+U(y)-\operatorname{OM}_0(x)-U(x)}\\
    &=\exp\left(\frac{C}{r^\alpha}\left(\frac{1}{(e^{U(y)-\omega(R)})^\alpha}-\frac{1}{(e^{U(x)+\omega(R)})^\alpha}\right)\right)e^{\operatorname{OM}_0(y)+U(y)-\operatorname{OM}_0(x)-U(x)}.
\end{align*}
As $U(x)>U(y)$ we can choose $r, R$ small enough so that
$$\frac{1}{(e^{U(y)-\omega(R)})^\alpha}-\frac{1}{(e^{U(x)+\omega(R)})^\alpha}<0.$$
Therefore 
$$\lim_{r\to 0^+}\frac{\mu(B(r,x))}{\mu(B(r,y))}=0,$$
concluding the proof of Theorem \ref{thm:main}.
\end{enumerate}

\end{proof} 
\section{Corollaries and discussion}
There are two immediate corollaries. 
\begin{corollary}
    Let $M_0=(\mathbb R^n, d_0,e^{-f}\lambda)$, where $d_0$ is the Euclidean distance and $\lambda$ is Lebesgue measure. Let $(\mathcal B, \|\cdot\|)$ be any normed vector space of functions $\mathbb R^n\to \mathbb R$ so that locally uniformly continuous functions are dense in $\mathcal B$. Then for any function $h\in \mathcal B$ and any $\varepsilon>0$, there is a choice of metric $d_{h,\varepsilon}$ so that the OM functional of $M_{h,\varepsilon}=(\mathbb R^n, d_{h,\varepsilon},e^{-f}\lambda)$ satisfies 
    \begin{equation}
        \|\operatorname{OM}(\mathbb R^n, d_{h,\varepsilon},e^{-f}\lambda)-h\|<\varepsilon.
    \end{equation}
\end{corollary}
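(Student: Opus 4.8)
The plan is to read the corollary as an application of Theorem \ref{thm:main}(b) in which only the metric is reweighted, combined with the density hypothesis on $\mathcal{B}$. First I would record the two facts that license part (b) for $M_0 = (\mathbb{R}^n, d_0, e^{-f}\lambda)$. By Lebesgue's differentiation theorem (as in the introduction) the OM functional is $\operatorname{OM}_0 = f$ on $Z = \mathbb{R}^n$, and by the Remark following Theorem \ref{thm:main} the measure $e^{-f}\lambda$, being equivalent to the volume form $\lambda$ of the Euclidean manifold $\mathbb{R}^n$, satisfies the small-ball scaling \eqref{eq:small-ball-C} with $p = n$ at every point of $Z$. Here I am tacitly using that $f$ is locally uniformly continuous, which is what makes $\operatorname{OM}_0 = f$ hold pointwise and the density $e^{-f}$ asymptotically constant on shrinking balls; I would state this regularity on $f$ explicitly at the outset. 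With these inputs, Theorem \ref{thm:main}(b) applied with $V \equiv 0$ asserts that for every locally uniformly continuous $U$ the reweighted space $(\mathbb{R}^n, e^{-U}d_0, e^{-f}\lambda)$ has OM functional $f - nU$ up to an additive constant.

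The construction is then immediate. Given $h \in \mathcal{B}$ and $\varepsilon > 0$, I would invoke the density of locally uniformly continuous functions in $\mathcal{B}$ to choose such a function $g$ with $\|g - h\| < \varepsilon$. Setting $U := (f - g)/n$, which is locally uniformly continuous since both $f$ and $g$ are, and defining $d_{h,\varepsilon} := e^{-U}d_0$ (a genuine metric topologically equivalent to $d_0$, by the discussion preceding the proof of Theorem \ref{thm:main}), the previous paragraph yields $\operatorname{OM}(\mathbb{R}^n, d_{h,\varepsilon}, e^{-f}\lambda) = f - nU = g$ up to an additive constant, so that $\|\operatorname{OM} - h\| = \|g - h\| < \varepsilon$. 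Conceptually, the family of attainable OM functionals $\{f - nU : U \text{ locally uniformly continuous}\}$ is exactly the set of locally uniformly continuous functions, because $f$ is itself one of them and this class is a vector space; density of that class in $\mathcal{B}$ is precisely the hypothesis, which is why any target $h$ can be approached.

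The computation is trivial once part (b) is in hand, so the points that actually require care are bookkeeping ones, and I expect the regularity of $f$ to be the main thing to get right. Because $U$ must lie in the class for which Theorem \ref{thm:main}(b) was proved, one genuinely needs $f - g$ to inherit local uniform continuity, hence $f$ locally uniformly continuous; without any regularity on $f$ the weight $U$ need not be admissible and the formula $f - nU$ is unavailable. Finally, since the OM functional is only defined up to an additive constant, the norm in the conclusion should be understood as a statement about the distinguished \emph{representative} $g$ produced by the construction, equivalently that some representative of $\operatorname{OM}$ lies within $\varepsilon$ of $h$; if $\mathcal{B}$ contains the constant functions this distinction is harmless.
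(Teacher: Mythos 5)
Your proof is correct and is precisely the argument the paper intends (the paper states this corollary without proof as an immediate consequence of Theorem \ref{thm:main}(b)): take $V\equiv 0$, note $p=n$ and $\operatorname{OM}_0=f$, choose a locally uniformly continuous $g$ with $\|g-h\|<\varepsilon$, and set $d_{h,\varepsilon}=e^{-(f-g)/n}d_0$ so that $\operatorname{OM}=f-n\cdot(f-g)/n=g$. Your observation that $f$ must implicitly be regular enough for $U=(f-g)/n$ to be admissible is a fair point the paper glosses over; note that on $\mathbb{R}^n$ continuity of $f$ already implies local uniform continuity, which is exactly how the paper handles the analogous issue in its Corollary \ref{cor:cart}.
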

\begin{corollary}
    Let $M_0=(X,d_0,\mu_0)$ satisfy the conditions of Theorem \ref{thm:main}, part (c). Then if $d$ is another metric conformally equivalent to $d_0$, with a locally uniformly continuous conformal density, so that the OM functional of $M=(X,d,\mu_0)$, $\operatorname{OM}(X,d,\mu_0)$ is well defined on $Z$, then $$\operatorname{OM}(X,d,\mu_0)=\operatorname{OM}(X,d_0,\mu_0).$$
\end{corollary}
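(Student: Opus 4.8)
The plan is to read this corollary as the contrapositive of Theorem \ref{thm:main}(c) combined with Theorem \ref{thm:main}(a), since all of the analytic content has already been established. First I would unpack the hypothesis: saying that $d$ is conformally equivalent to $d_0$ with a locally uniformly continuous conformal density means precisely that $d=e^{-U}d_0$ for some locally uniformly continuous $U:X\to\mathbb R$ in the sense of \eqref{def:d}. Thus we are in the setting of Theorem \ref{thm:main} applied to $(X,d_0,\mu_0)$ with the measure reweighting trivial, i.e. $V\equiv 0$, so that $M=(X,e^{-U}d_0,\mu_0)$.

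Next I would invoke part (c). Since $M_0$ satisfies the small ball estimate \eqref{eq:small-ball-est} by hypothesis, part (c) tells us that whenever $U$ is nonconstant the OM functional of $M=(X,e^{-U}d_0,\mu_0)$ fails to exist on $Z$: for every $x\in Z$ there is a $y\in Z$ making the limiting ratio of small-ball masses either $0$ or $\infty$. But we have assumed the opposite, namely that $\operatorname{OM}(X,d,\mu_0)$ is well defined on $Z$. Taking the contrapositive therefore forces $U$ to be constant.

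Finally I would apply part (a). With $U$ constant and $V\equiv 0$, Theorem \ref{thm:main}(a) identifies the OM functional of $M=(X,e^{-U}d_0,\mu_0)$ as $\operatorname{OM}_0+V=\operatorname{OM}_0$, which is exactly $\operatorname{OM}(X,d_0,\mu_0)$ up to the usual additive constant. This yields the claimed equality.

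The proof carries no genuine obstacle, as the real work is done by Theorem \ref{thm:main}; the only point requiring care is the first step, namely confirming that the stated notion of conformal equivalence with locally uniformly continuous density coincides exactly with the reweighting $d=e^{-U}d_0$ of \eqref{def:d}, so that parts (a) and (c) apply verbatim. It is worth emphasizing that the corollary is a rigidity statement: under a small ball estimate of the form \eqref{eq:small-ball-est}, the only conformal change of metric with regular density that preserves the existence of an OM functional is, up to a global rescaling, no change at all.
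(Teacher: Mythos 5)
Your proposal is correct and is precisely the argument the paper intends: the corollary is stated as an ``immediate'' consequence of Theorem \ref{thm:main}, namely the contrapositive of part (c) (well-definedness of $\operatorname{OM}(X,d,\mu_0)$ on $Z$ forces the conformal density $U$ to be constant) followed by part (a) with $V\equiv 0$ to identify the resulting OM functional with $\operatorname{OM}_0$ up to an additive constant. Your unpacking of ``conformally equivalent with locally uniformly continuous conformal density'' as $d=e^{-U}d_0$ in the sense of \eqref{def:d} is exactly the reading the paper relies on.
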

OM functionals in many finite dimensional settings can be freely made to be any measurable function but in many infinite dimensional settings conformally changing the metric destroys the existence of the OM functional. Therefore from the standpoint of OM theory, reweighting measures is equivalent to reweighting metrics in finite dimensions. However they are non-equivalent in infinite dimensions. Also arguably, these corollaries show that OM functionals of a fixed probability measure are more ``canonical" in infinite dimensions than they are in finite dimensions. 

One further corollary is the following, saying that we can ``uniformize" many measures on $\mathbb R^d$ by reweighting the geometry.
\begin{corollary}\label{cor:cart}
    Let $M_0=(\mathbb R^d,d_0, e^{-f}\lambda)$, where $d_0$ is the Euclidean distance, $\lambda$ is the Lebesgue measure and $f$ is continuous. Then there is a choice of ``uniformization" metric, $d_{\text{uniform}}$ so that the OM functional of $M=(\mathbb R^d,d_{\text{uniform}},e^{-f}\lambda)$ satisfies 
    $$\operatorname{OM}(x)=0$$
    for all $x\in \mathbb R^d$.
\end{corollary}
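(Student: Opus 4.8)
The plan is to realize the uniformization as a pure reweighting of the metric, with the measure held fixed, and then to read off the conclusion from part (b) of Theorem \ref{thm:main}. I take the base space to be $M_0=(\mathbb R^d,d_0,e^{-f}\lambda)$ itself, so that $\mu_0=e^{-f}\lambda$ and no measure reweighting is needed, i.e. $V\equiv 0$. By Lebesgue's differentiation theorem, as recalled in the introduction, the OM functional of $M_0$ is $\operatorname{OM}_0=f$; this is where the hypothesis that $f$ is continuous first enters, since it guarantees the ratio in \eqref{eq:raio} converges at every point of $\mathbb R^d$.

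Next I would verify the two hypotheses of part (b) for this base space. Because $f$ is continuous, $e^{-f}$ is continuous and strictly positive, so $e^{-f}\lambda$ is a measure equivalent to Lebesgue measure, the Riemannian volume measure of the flat $\mathbb R^d$. By the Remark following Theorem \ref{thm:main} this forces the small ball condition \eqref{eq:small-ball-C} to hold with $p=d$ at any $q\in\mathbb R^d$. Directly, continuity of $f$ gives $\mu_0(B_0(r,q))\sim e^{-f(q)}\omega_d r^d$ as $r\to 0^+$, where $\omega_d$ is the volume of the unit ball, so the ratio in \eqref{eq:small-ball-C} tends to $C^d$. I also need the conformal weight to be locally uniformly continuous; taking $U:=f/d$, this is immediate because $f$ is continuous on the locally compact space $\mathbb R^d$, hence uniformly continuous on every compact neighborhood, and $1/d$ is a constant.

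With these checks in hand I would set $d_{\text{uniform}}:=e^{-U}d_0=e^{-f/d}d_0$ and apply part (b) of Theorem \ref{thm:main} with $U=f/d$ and $V\equiv 0$. The transformation formula there yields
\[
\operatorname{OM}=\operatorname{OM}_0-pU+V=f-d\cdot\frac{f}{d}+0=0,
\]
which is exactly the assertion.

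The only step carrying genuine content is confirming that the \emph{weighted} measure $e^{-f}\lambda$, rather than Lebesgue measure alone, satisfies the Ahlfors-type regularity \eqref{eq:small-ball-C} with the exponent $p=d$. I do not expect this to be a serious obstacle: since $f$ is continuous the weight is asymptotically constant on small balls, so the weight cancels in the ratio and the exponent is inherited from $\lambda$. Everything else is a direct substitution of $U=f/d$, $V=0$ into part (b).
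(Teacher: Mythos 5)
Your proof is correct and is essentially the paper's own argument: the paper's entire proof is to observe that continuous functions on $\mathbb R^d$ are locally uniformly continuous and then apply Theorem \ref{thm:main}(b) with conformal weight $f/d$ and the measure held fixed, so your added verifications (that $\operatorname{OM}_0=f$ via Lebesgue differentiation, that $e^{-f}\lambda$ satisfies \eqref{eq:small-ball-C} with $p=d$, and that $f/d$ is locally uniformly continuous) are exactly the steps the paper leaves implicit. One point in your favor: under the theorem's convention $d=e^{-U}d_0$ with $\operatorname{OM}=\operatorname{OM}_0-pU+V$, the correct choice is $U=f/d$, i.e. $d_{\text{uniform}}=e^{-f/d}d_0$ as you write, whereas the paper's proof states $d_{\text{uniform}}=e^{f/d}d_0$; read literally that corresponds to $U=-f/d$ and would yield $\operatorname{OM}=f-d\,(-f/d)=2f$, so the paper's sign appears to be a typo and your version is the consistent one.
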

\begin{proof}
    Any continuous function on $\mathbb R^d$ is locally uniformly continuous, so let $d_{\text{uniform}}=e^{f/d}d_0$.
\end{proof}
The uniformization metric can be seen analogously to a popular data visualization technique called a cartogram, where a map is rescaled so data is displayed geometrically. By distorting the geometry of a map, we can make a particular distribution (such as population density on Earth, see Figure \ref{fig:cart}) uniform. Our Theorem \ref{thm:main}, part (c), may be understood as the statement that cartograms are impossible in many infinite dimensional settings. 

\bibliographystyle{plain}
\bibliography{References}

\begin{figure}
    \hspace{-3cm}\includegraphics[width=19cm, angle=90 ]{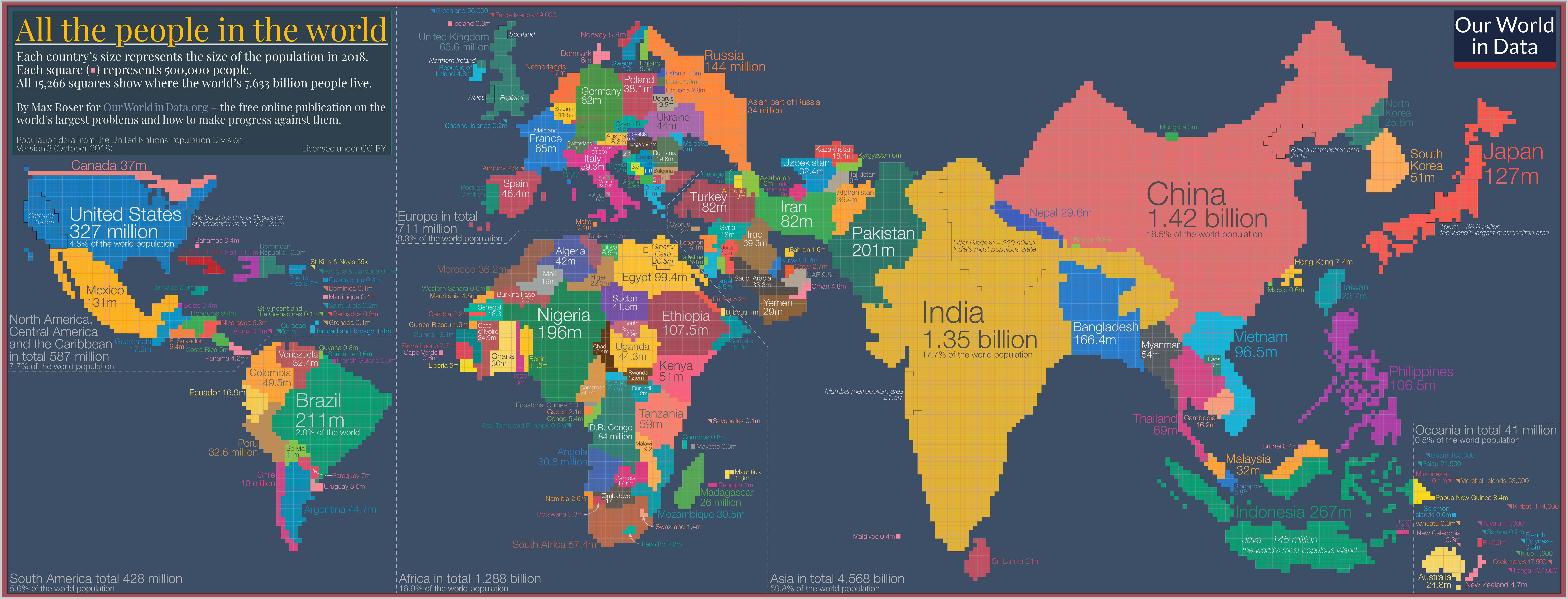}
    \caption{A cartogram of Earth, reweighted by population, taken from \cite{Cartogram}. By distorting the geometry of Earth, we can make population density uniform. This can be seen as analogous to Corollary \ref{cor:cart}.}
    \label{fig:cart}
\end{figure}

\end{document}